\newtheorem{theorem}{Theorem}[section]
\newtheorem{lemma}[theorem]{Lemma}
\theoremstyle{definition}
\newtheorem{example}[theorem]{Example}
\theoremstyle{remark}
\numberwithin{equation}{section}
\newcommand{\Mmn}{\mathbb{M}_m \to \mathbb{M}_n}
\newcommand{\tr}{\operatorname{tr}}
\newcommand{\HS}{\text{HS}}
\newcommand{\dcp}{d_{\mathrm{CP}}}
\begin{document}

\title{Hermitian Maps:   Approximations  and Completely Positive Extensions}
\author[M. Kian, M.Rostamian Delavar]{Mohsen Kian and Mohsen Rostamian Delavar}
\address{Mohsen Kian: Department of Mathematics, University of Bojnord, P. O. Box 1339, Bojnord 94531, Iran}
\email{kian@ub.ac.ir }
\address{Mohsen Rostamian Delavar: Department of Mathematics, University of Bojnord, P. O. Box 1339, Bojnord 94531, Iran}
\email{a.rostamian@ub.ac.ir}

\subjclass[2020]{Primary:47A65, 15A69; Secondary: 15A60}

\keywords{Hermitian map,   completely positive map, Choi matrxi,  decomposition, CP-distance}

\begin{abstract}
This study investigates Hermitian linear maps, focusing on their decomposition into completely positive (CP) maps and their extensions to CP maps using auxiliary spaces. We derive a precise lower bound on the Hilbert-Schmidt norm of the negative component in any CP decomposition, proving its attainability through the Jordan decomposition. Additionally, we demonstrate that the positive part of this decomposition provides the optimal CP approximation in the Hilbert-Schmidt norm. We also determine the minimal dimension of an auxiliary space required to extend a Hermitian map to a CP map, with explicit constructions provided. Practical examples illustrate the application of our results
\end{abstract}

\maketitle

\section{Introduction and Preliminaries}

Linear maps between matrix algebras are pivotal in quantum information theory and operator algebras, serving as the foundation for understanding quantum operations and their structural properties. Hermitian linear maps, which preserve the Hermitian property of matrices, and completely positive (CP) maps, which model physically realizable quantum channels, hold particular importance. These maps are central to describing quantum dynamics, entanglement transformations, and information processing protocols. This paper elucidates the interplay between Hermitian and CP maps, with a focus on their decomposition into CP maps, optimal approximations in the Hilbert-Schmidt norm, and extensions to CP maps via auxiliary spaces. CP maps, characterized by positive semi-definite Choi matrices, are indispensable in quantum mechanics, ensuring positivity across all tensor extensions \cite{watrous2018}. In contrast, Hermitian maps, while preserving Hermitian properties, may lack complete positivity, leading to deviations from physical realizability. Quantifying these deviations and constructing CP approximations or extensions are essential for applications such as quantum channel correction, error mitigation, and quantum algorithm design.

The study of positive and CP maps forms a cornerstone of operator algebras and quantum information theory. St{\o}rmer’s seminal work \cite{stormer2013} offers a comprehensive framework for positive linear maps on C*-algebras, elucidating their algebraic and structural properties.

Recent advances have deepened our understanding of positive map decomposition and approximation. Ando \cite{ando2018} investigated the decomposition of positive maps as the difference of two CP or super-positive maps, leveraging the celebrated Choi matrix representation. This concept has been extended to infinite-dimensional settings by Han, Kye, and St{\o}rmer \cite{han2024}, providing a broader framework for linear maps in quantum systems. In realm of quantum information, Dadkhah, Kian, and Moslehian \cite{dadkhah2024} explored the decomposition of tracial positive maps, highlighting their utility in quantum information processing. The first author’s recent work \cite{kian2025} analyzed deviations from complete positivity, offering structural insights into Hermitian maps and their approximations by leveraging Choi matrix eigenvalues. This work underpins our optimal approximation results and motivates our use of the Hilbert-Schmidt norm to quantify deviations. Positive maps, including Hermitian-preserving ones, are crucial for entanglement detection, with recent studies introducing partially contractive maps to refine the classification of entangled states \cite{sisu2018}. Additionally, Buscemi et al. \cite{buscemi2013} proposed a universal two-point quantum correlator, decomposing Hermitian maps into CP maps to measure correlation functions, inspiring our statistical decomposition approach.

The concept of CP extensions is closely linked to quantum broadcasting and dilation theorems. Parzygnat, Fullwood, Buscemi, and Chiribella \cite{parzygnat2023} introduced virtual quantum broadcasting, which extends quantum operations to higher-dimensional spaces while preserving physical properties.

Our work advances the study of Hermitian maps by establishing a sharp lower bound on the Hilbert-Schmidt norm of the negative component in their decomposition into completely positive (CP) maps, proving that the positive part of the Jordan decomposition yields the optimal CP approximation. Additionally, we determine the minimal dimension of an auxiliary space required to extend a Hermitian map to a CP map, providing explicit constructions to support practical applications.

\subsection*{Notations and Preliminaries}

We denote by \(\mathbb{M}_m\) the algebra of all \(m \times m\) complex matrices, with \(I_m\) representing the \(m \times m\) identity matrix. The L\"{o}wner partial order on Hermitian matrices is defined as \(A \leq B\) if \(B - A\) is positive semi-definite. For a Hermitian matrix \(A \in \mathbb{M}_m\), the Jordan decomposition is \(A = A^+ - A^-\), where \(A^+ = \frac{1}{2}(A + |A|)\) and \(A^- = \frac{1}{2}(|A| - A)\) are the positive and negative parts, respectively, with \(|A| = (A^* A)^{1/2}\).

For a Hermitian matrix \(A \in \mathbb{M}_{m}\), the multiplicity of an eigenvalue \(\lambda\) is the dimension of the corresponding eigenspace, i.e., the dimension of \(\ker(A - \lambda I_{m})\).

 The Choi matrix of a linear map \(\Phi:\Mmn\) is defined as
\[
\mathbf{C}_\Phi = \sum_{i,j=1}^m E_{ij} \otimes \Phi(E_{ij}) \in \mathbb{M}_m \otimes \mathbb{M}_n,
\]
where \(\{E_{ij}\}\) is  the standard matrix units in $\mathbb{M}_m$. The linear map $\Phi$ can be recovered from its Choi matrix $\mathbf{C}_\Phi$ via
\[
\Phi(X) = \sum_{j,k=1}^m \xi_{jk} \Phi(E_{jk}), \quad \text{where } \xi_{jk} = \langle X, E_{kj} \rangle = \operatorname{tr}(X E_{kj}^*).
\]
 This defines an isomorphism, known as the Choi–Jamio{\l}kowski isomorphism.  The Choi matrix representation, fundamental to our analysis, has been recently revisited in the context of positive maps in quantum information \cite{Majewski2024}. Furthermore, the Choi–Jamio{\l}kowski isomorphism, as detailed in recent work \cite{homa2024}, provides a practical framework for analyzing completely positive maps in quantum information.
 
 The Hilbert-Schmidt norm of \(\Phi\) is given by
\[
\|\Phi\|_{HS} = \sqrt{\tr(\mathbf{C}_\Phi^* \mathbf{C}_\Phi)}.
\]

The CP-distance of linear mapping  \(\Phi: \mathbb{M}_m \to \mathbb{M}_n\) is  defined by $d_{\mathrm{CP}}(\Phi)=\max\{0,-\lambda_{\min}(\mathbf{C}_\Phi)\}$. It was introduced
 in \cite{kian2025} as a measure of deviation $\Phi$ from being complete positive.

A linear map \(\Phi: \mathbb{M}_m \to \mathbb{M}_n\) is called \emph{Hermitian} (or Hermitian-preserving) if \(\Phi(A^*) = \Phi(A)^*\) for all \(A \in \mathbb{M}_m\). It    is called \emph{positive} if it maps positive semi-definite matrices of $\mathbb{M}_m$    to positive semi-definite matrices in \(\mathbb{M}_n\). A map $\Phi$ is \emph{completely positive} (CP) if it remains positive under all tensor extensions, i.e., \(I_k \otimes \Phi: \mathbb{M}_k \otimes \mathbb{M}_m \to \mathbb{M}_k \otimes \mathbb{M}_n\) is positive for all \(k\), where \(I_k\) is the identity map on \(\mathbb{M}_k\).
It is known that $\Phi$   is    completely positive  if and only if its Choi matrix $\mathbf{C}_\Phi$ is a positive semi-definite matrix \cite{choi1975}.

For \(X \in \mathbb{M}_m \otimes \mathbb{M}_n\), the partial trace over the second factor is the linear map \(\tr_2: \mathbb{M}_m \otimes \mathbb{M}_n \to \mathbb{M}_m\) defined by
\[
\tr_2(X) = \sum_{k=1}^n (I_m \otimes E_{kk}) X (I_m \otimes E_{kk}),
\]
where \(E_{kk} \in \mathbb{M}_n\) are the standard matrix units with 1 in the \((k,k)\)-entry and 0 elsewhere, and \(I_m\) is the identity in \(\mathbb{M}_m\).
For a tensor product \(A \in \mathbb{M}_m\), \(B \in \mathbb{M}_n\), the partial trace satisfies
\[
\tr_2(A \otimes B) = \tr(B) A.
\]
Similarly, the partial trace over the first factor, \(\tr_1: \mathbb{M}_m \otimes \mathbb{M}_n \to \mathbb{M}_n\), is defined by
\[
\tr_1(X) = \sum_{j=1}^m (E_{jj} \otimes I_n) X (E_{jj} \otimes I_n),
\]
with \([\tr_1(X)]_{kl} = \sum_{j=1}^m x_{jk,jl}\) and \(\tr_1(A \otimes B) = \tr(A) B\). The partial trace is independent of the choice of basis and satisfies \(\tr(\tr_1(X)) = \tr(\tr_2(X)) = \tr(X)\).


 \bigskip

\section{Main Results}
We demonstrate that in any decomposition of a Hermitian map into two completely positive maps, the magnitude of the negative component depends on both the extent of deviation from complete positivity and the dimension of the associated eigenspace. To achieve  this, we need a lemma, which, despite appearing to be a known result in matrix analysis, lacks a direct reference in the literature, prompting us to include a proof.

\begin{lemma}\label{lm-1}
Let $A=A^+-A^-$ be the Jordan decomposition of a Hermitian matrix $A\in\mathbb{M}_d$. Then
  $ A^- $ is the minimal positive semi-definite matrix such that $ A + A^- \geq 0 $. This mean that any $ B \geq 0 $ with $ A + B \geq 0 $ satisfies $ B \geq A^- $, and $ A^- $ itself achieves equality in this bound.
\end{lemma}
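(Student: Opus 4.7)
The plan is to split the statement into two parts: (i) verifying that $A^- \in \mathcal{S} := \{B \geq 0 : A + B \geq 0\}$, and (ii) establishing its minimality inside $\mathcal{S}$ with respect to the L\"{o}wner order. Part (i) is immediate from the Jordan decomposition, since $A^- \geq 0$ by construction and $A + A^- = A^+ \geq 0$. This already delivers the equality clause, so the content of the lemma is entirely in the minimality.

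For part (ii), let $B \in \mathcal{S}$. My approach is to diagonalize $A$ in an orthonormal basis and let $P$ be the orthogonal projection onto the negative eigenspace of $A$. Under the block decomposition $\mathbb{C}^d = \mathrm{Im}(P) \oplus \mathrm{Im}(I - P)$, the three relevant matrices take the form
\[
A = \begin{pmatrix} -a & 0 \\ 0 & c \end{pmatrix}, \qquad A^- = \begin{pmatrix} a & 0 \\ 0 & 0 \end{pmatrix}, \qquad B = \begin{pmatrix} X & Y \\ Y^* & Z \end{pmatrix},
\]
with $a > 0$ on $\mathrm{Im}(P)$ and $c \geq 0$. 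An easy first step is to test $A + B \geq 0$ against vectors of $\mathrm{Im}(P)$, which yields $X \geq a$, i.e.\ the $(1,1)$-block of $B - A^-$ is positive semi-definite; the $(2,2)$-block $Z$ is positive semi-definite directly from $B \geq 0$. By the standard Schur-complement characterization, $B - A^- \geq 0$ is then equivalent to
\[
X - a \;\geq\; Y\, Z^{+}\, Y^*
\]
together with the range condition $\mathrm{range}(Y) \subseteq \mathrm{range}(Z)$.

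The principal obstacle is this off-diagonal inequality. The constraint $B \geq 0$ gives $X \geq Y Z^{+} Y^*$, which lacks the subtraction of $a$; the constraint $A + B \geq 0$ gives $X - a \geq Y (Z+c)^{+} Y^*$, which has the correct shift on the left but compares against the strictly smaller Schur term on the right, because $(Z+c)^{+} \leq Z^{+}$. Neither hypothesis alone is strong enough, and the heart of the proof must combine them. I would pursue this by using an integral or Neumann-series representation of the pseudoinverse to quantify the gap $Y\bigl(Z^{+} - (Z+c)^{+}\bigr) Y^*$ and to show it is absorbed by $a$, exploiting the structural fact that the positive part of $A$ sits precisely in the $(2,2)$-block where it couples with $Y$ through the Schur complement. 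This combination step is the hard part; once it is done, the rest is bookkeeping, and taking $B = A^-$ (so $Y = 0$, $Z = 0$) shows the bound is attained.
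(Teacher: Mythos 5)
Your instinct at the end --- that neither hypothesis alone yields the Schur-complement inequality $X - a \geq Y Z^{+} Y^{*}$ and that the two must somehow be combined --- is exactly where the argument breaks, but not because the combination step is merely hard: it is impossible, because the minimality claim of the lemma is false. Take $d=2$, $A = \operatorname{diag}(-1,1)$ (so in your notation $a=1$, $c=1$, $A^{-}=\operatorname{diag}(1,0)$) and
\[
B = \begin{pmatrix} 2 & \sqrt{2} \\ \sqrt{2} & 1 \end{pmatrix}.
\]
Then $B \geq 0$ and $A+B = \begin{pmatrix} 1 & \sqrt{2} \\ \sqrt{2} & 2 \end{pmatrix} \geq 0$ (both have determinant $0$ and positive trace), yet $B - A^{-} = \begin{pmatrix} 1 & \sqrt{2} \\ \sqrt{2} & 1 \end{pmatrix}$ has determinant $-1$, so $B \not\geq A^{-}$. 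In your bookkeeping this is precisely the configuration where $X \geq YZ^{+}Y^{*}$ and $X-a \geq Y(Z+c)^{+}Y^{*}$ both hold (here with equality) while $X-a \geq YZ^{+}Y^{*}$ fails; no integral or Neumann-series manipulation of the pseudoinverses can close that gap. What survives is only the weaker statement that $A^{-}$ is a \emph{minimal element} of $\mathcal{S}$ (no $B \in \mathcal{S}$ with $B \leq A^{-}$ other than $A^{-}$ itself), not its minimum: the L\"{o}wner order admits no minimum of $\mathcal{S}$ once $A$ has eigenvalues of both signs.

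For comparison, the paper's own proof commits the complementary error: it expands $\langle x,(B-A^{-})x\rangle$ over $V^{+}\oplus V^{-}\oplus V^{0}$ and silently discards the cross terms $\langle x^{+},Bx^{-}\rangle$, etc., which is legitimate only when $B$ is block-diagonal for that decomposition --- exactly the off-diagonal coupling $Y$ that you correctly refused to ignore. So your write-up is the more honest of the two: it isolates the true obstruction instead of hiding it, even though it cannot (and should not) overcome it. The downstream use of the lemma in Theorem~\ref{thm:main} can still be repaired without the false operator inequality: compressing $\mathbf{C}_{\Phi}+\mathbf{C}_{\Phi^{(2)}}=\mathbf{C}_{\Phi^{(1)}}\geq 0$ by the projection $P_{\min}$ onto $\ker(\mathbf{C}_{\Phi}-\lambda_{\min}I)$ gives $P_{\min}\mathbf{C}_{\Phi^{(2)}}P_{\min}\geq \dcp(\Phi)P_{\min}$, and since compression contracts the Hilbert--Schmidt norm this already yields $\|\Phi^{(2)}\|_{\HS}\geq \sqrt{k}\,\dcp(\Phi)$.
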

\begin{proof}
 The spectral theorem allows us to write:
$ A = \sum_i \lambda_i P_i, $
where $ \lambda_i $ are the real eigenvalues (positive, negative, or zero), and $ P_i $ are the orthogonal projection   onto the corresponding eigenspaces, satisfying $ P_i P_j = \delta_{ij} P_i $ and $ \sum_i P_i = I $.
The Jordan decomposition defines the positive part and negative part as  $ A^+ = \sum_{\lambda_i > 0} \lambda_i P_i $ and  $ A^- = \sum_{\lambda_i < 0} (-\lambda_i) P_i $, respectively.
Define subspaces
\begin{align*}
   V^+ &= \bigoplus_{\lambda_i > 0} \operatorname{im}(P_i): \quad \mbox{ subspace for positive eigenvalues};\\
  V^- &= \bigoplus_{\lambda_i < 0} \operatorname{im}(P_i):  \quad \mbox{subspace for negative eigenvalues};\\
  V^0 &= \operatorname{ker}(A) = \bigoplus_{\lambda_i = 0} \operatorname{im}(P_i): \quad \mbox{kernel of $ A $}.
\end{align*}
These subspaces are orthogonal, and the Hilbert space decomposes as $ \mathbb{C}^n = V^+ \oplus V^- \oplus V^0 $.

On $ V^- $, we have $ A = \sum_{\lambda_i < 0} \lambda_i P_i = -A^- $.
On $ V^+ $, we have $ A = A^+ $, and $ A^- = 0 $.
On $ V^0 $, we have $ A = 0 $, $ A^- = 0 $.

Now suppose  $ B \geq 0 $ and $ A + B \geq 0 $. We need $ B \geq A^- $.
For any $ x = x^+ + x^- + x^0 $ (decomposed along $ V^+ \oplus V^- \oplus V^0 $),
$$ \langle x, (B - A^-) x \rangle = \langle x^+, B x^+ \rangle + \langle x^-, (B - A^-) x^- \rangle + \langle x^0, B x^0 \rangle, $$
 as $A^{-}=0$ on $ V^+ $ and $ V^0$.
 Each term is non-negative, specifically, since $x_-\in V^{-}$ and $A=-A^{-}$ on $V^{-}$, the hypothsis $A+B\geq0$ implies $B\geq A^{-}$ on $V^{-}$.  This complete the proof.
 \end{proof}

We propose  a lower bound on the Hilbert-Schmidt norm of the negative component in \emph{any} decomposition of a Hermitian map into two completely positive maps.
\begin{theorem}
\label{thm:main}
Let \(\Phi: \Mmn\) be a Hermitian linear map with the Choi matrix $\mathbf{C}_\Phi$.    Let \(k\) be the multiplicity of the smallest eigenvalue \(\lambda_{\min}(\mathbf{C}_\Phi) = -\dcp(\Phi)\) of  \(\mathbf{C}_\Phi\). Then, in any decomposition \(\Phi = \Phi^{(1)} - \Phi^{(2)}\), where \(\Phi^{(1)}\) and \(\Phi^{(2)}\) are completely positive maps, the following inequality holds:
\begin{align}\label{main1}
\|\Phi^{(2)}\|_{\HS} \geq \sqrt{k}\,\dcp(\Phi).
\end{align}
Moreover, this bound is sharp, meaning there exists a decomposition where equality is achieved.
\end{theorem}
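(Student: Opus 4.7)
The plan is to translate the problem to the level of Choi matrices and invoke Lemma \ref{lm-1}. A decomposition $\Phi = \Phi^{(1)} - \Phi^{(2)}$ with $\Phi^{(1)}, \Phi^{(2)}$ completely positive corresponds, via the Choi--Jamio\l{}kowski isomorphism and Choi's criterion, to the matrix identity $\mathbf{C}_\Phi = \mathbf{C}_{\Phi^{(1)}} - \mathbf{C}_{\Phi^{(2)}}$ in $\mathbb{M}_m \otimes \mathbb{M}_n$, where both $\mathbf{C}_{\Phi^{(i)}}$ are positive semi-definite. Since $\mathbf{C}_{\Phi^{(2)}} \geq 0$ and $\mathbf{C}_\Phi + \mathbf{C}_{\Phi^{(2)}} = \mathbf{C}_{\Phi^{(1)}} \geq 0$, Lemma \ref{lm-1} applied to $A = \mathbf{C}_\Phi$, $B = \mathbf{C}_{\Phi^{(2)}}$ immediately yields the Löwner inequality $\mathbf{C}_{\Phi^{(2)}} \geq (\mathbf{C}_\Phi)^-$.

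Next, I would convert this operator inequality into the desired Hilbert--Schmidt bound. Writing $\mathbf{C}_{\Phi^{(2)}} = (\mathbf{C}_\Phi)^- + D$ with $D \geq 0$ and expanding $\mathbf{C}_{\Phi^{(2)}}^{\,2}$, the trace satisfies
\[
\tr\bigl(\mathbf{C}_{\Phi^{(2)}}^{\,2}\bigr) = \tr\bigl(((\mathbf{C}_\Phi)^-)^2\bigr) + 2\,\tr\bigl((\mathbf{C}_\Phi)^- D\bigr) + \tr(D^2) \geq \tr\bigl(((\mathbf{C}_\Phi)^-)^2\bigr),
\]
because both cross terms are non-negative: $\tr((\mathbf{C}_\Phi)^- D) = \tr(((\mathbf{C}_\Phi)^-)^{1/2} D ((\mathbf{C}_\Phi)^-)^{1/2}) \geq 0$ and $\tr(D^2)\geq 0$. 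Spectral decomposition of $(\mathbf{C}_\Phi)^-$ then gives $\tr(((\mathbf{C}_\Phi)^-)^2) = \sum_{\lambda_i < 0} \lambda_i^2$, and since the smallest eigenvalue $-\dcp(\Phi)$ occurs with multiplicity $k$ we obtain $\sum_{\lambda_i < 0} \lambda_i^2 \geq k\,\dcp(\Phi)^2$. Combining gives $\|\Phi^{(2)}\|_{\HS}^2 = \tr(\mathbf{C}_{\Phi^{(2)}}^{\,2}) \geq k\,\dcp(\Phi)^2$, which is precisely \eqref{main1}.

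For sharpness I would exhibit a concrete Hermitian map whose Choi matrix admits $-\dcp(\Phi)$ as its \emph{only} negative eigenvalue, with multiplicity exactly $k$. For such a map the Jordan decomposition $\Phi = \Phi^+ - \Phi^-$, whose two summands are CP by Choi's criterion applied to $(\mathbf{C}_\Phi)^\pm$, satisfies $\|\Phi^-\|_{\HS}^2 = \tr(((\mathbf{C}_\Phi)^-)^2) = k\,\dcp(\Phi)^2$, so equality holds. Such an example is built by prescribing the spectral data of $\mathbf{C}_\Phi$ (one negative eigenvalue of multiplicity $k$ together with non-negative eigenvalues) and recovering $\Phi$ through the Choi--Jamio\l{}kowski isomorphism.

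The main obstacle is the sharpness step: both inequalities used in the lower bound, namely $\mathbf{C}_{\Phi^{(2)}} \geq (\mathbf{C}_\Phi)^-$ and $\sum_{\lambda_i<0}\lambda_i^2 \geq k\,\dcp(\Phi)^2$, are generally strict. The first is saturated by the Jordan choice $\Phi^{(2)} = \Phi^-$, but the second requires that $\mathbf{C}_\Phi$ have no negative eigenvalue other than $-\dcp(\Phi)$; hence demonstrating attainability must be done by explicit construction rather than simply appealing to the Jordan decomposition of an arbitrary $\Phi$.
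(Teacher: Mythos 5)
Your proposal is correct and follows essentially the same route as the paper: reduce to Choi matrices, apply Lemma~\ref{lm-1} to get $\mathbf{C}_{\Phi^{(2)}} \geq \mathbf{C}_\Phi^-$, and pass to the Hilbert--Schmidt norm. You actually fill in two points the paper leaves implicit --- the monotonicity step $\mathbf{C}_{\Phi^{(2)}} \geq \mathbf{C}_\Phi^- \geq 0 \Rightarrow \tr(\mathbf{C}_{\Phi^{(2)}}^{\,2}) \geq \tr((\mathbf{C}_\Phi^-)^2)$ via the expansion with $D \geq 0$, and the observation that equality is attainable only when $-\dcp(\Phi)$ is the sole negative eigenvalue of $\mathbf{C}_\Phi$, so sharpness must be read as attained for \emph{some} Hermitian map (as in the paper's subsequent examples) rather than for every $\Phi$.
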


\begin{proof}
Since \(\Phi\) is a Hermitian linear map with \(\dcp(\Phi) > 0\), the Choi matrix \(\mathbf{C}_\Phi \in \mathbb{M}_m \otimes \mathbb{M}_n \) is Hermitian but not positive semi-definite, and its smallest eigenvalue satisfies \(\lambda_{\min}(\mathbf{C}_\Phi) = -\dcp(\Phi) < 0\) with multiplicity \(k\).
Consider the spectral decomposition of \(\mathbf{C}_\Phi\):
\[
\mathbf{C}_\Phi = \sum_{i} \lambda_i P_i,
\]
where \(\lambda_i\) are the  eigenvalues of \(\mathbf{C}_\Phi\), and \(P_i\) are the orthogonal projections onto the corresponding eigenspaces, satisfying \(\sum_i P_i = I_{mn}\) and \(P_i P_j = \delta_{ij} P_i\). Since \(\mathbf{C}_\Phi\) is Hermitian, all \(\lambda_i \in \mathbb{R}\). Define the positive and negative parts:
\[
\mathbf{C}_\Phi^+ = \sum_{\lambda_i > 0} \lambda_i P_i, \quad \mathbf{C}_\Phi^- = \sum_{\lambda_i < 0} (-\lambda_i) P_i,
\]
so that
\[
\mathbf{C}_\Phi = \mathbf{C}_\Phi^+ - \mathbf{C}_\Phi^-,
\]
is the  Jordan decomposition, where \(\mathbf{C}_\Phi^+ \geq 0\), \(\mathbf{C}_\Phi^- \geq 0\), and \(\mathbf{C}_\Phi^+ \mathbf{C}_\Phi^- = 0\) due to the orthogonality of the projections.  Let \(\Phi^+\) and \(\Phi^-\) be the completely positive maps with Choi matrices \(\mathbf{C}_\Phi^+\) and \(\mathbf{C}_\Phi^-\), respectively, so that
\[
\Phi = \Phi^+ - \Phi^-.
\]
The Hilbert-Schmidt norm of \(\Phi^-\) is given by
\[
\|\Phi^-\|_{\HS} = \sqrt{\tr(\mathbf{C}_\Phi^- \mathbf{C}_\Phi^-)} = \sqrt{\tr((\mathbf{C}_\Phi^-)^2)}.
\]
Since
\[
\mathbf{C}_\Phi^- = \sum_{\lambda_i < 0} (-\lambda_i) P_i,
\]
we compute
\[
(\mathbf{C}_\Phi^-)^2 = \left( \sum_{\lambda_i < 0} (-\lambda_i) P_i \right)^2 = \sum_{\lambda_i < 0} (-\lambda_i)^2 P_i,
\]
because \(P_i P_j = \delta_{ij} P_i\). Thus,
\[
\tr((\mathbf{C}_\Phi^-)^2) = \tr\left( \sum_{\lambda_i < 0} (-\lambda_i)^2 P_i \right) = \sum_{\lambda_i < 0} (-\lambda_i)^2 \tr(P_i).
\]
Let \(\lambda_{\min} = -\dcp(\Phi)\) be the only negative  eigenvalue, with corresponding projection \(P_{\min}\) and multiplicity \(\tr(P_{\min}) = k\).   Then:
\[
\mathbf{C}_\Phi^- = (-\lambda_{\min}) P_{\min} = \dcp(\Phi) P_{\min}\quad\Rightarrow\quad (\mathbf{C}_\Phi^-)^2 = \dcp(\Phi)^2 P_{\min}^2 = \dcp(\Phi)^2 P_{\min},
\]
and so
\[
\tr((\mathbf{C}_\Phi^-)^2) = \dcp(\Phi)^2 \tr(P_{\min}) = \dcp(\Phi)^2 \cdot k.
\]
So:
\[
\|\Phi^-\|_{\HS} =\sqrt{\tr((\mathbf{C}_\Phi^-)^2)} =\sqrt{\dcp(\Phi)^2 \cdot k} = \dcp(\Phi) \cdot \sqrt{k}.
\]
This shows that in particular the bound in \eqref{main1} is sharp. This is somehow the  size of \(\Phi^-\) in the decomposition \(\Phi = \Phi^+ - \Phi^-\), in which \(\lambda_{\min}\) is the only negative  eigenvalue.  If there are other negative eigenvalues, \(\tr((C_\Phi^-)^2)\) could be larger, say
\begin{align*}
\|\Phi^-\|_{\HS}^2=\tr((\mathbf{C}_\Phi^-)^2)&= \sum_{\lambda_i < 0} (-\lambda_i)^2 \tr(P_i)\\
&=(-\lambda_{\mathrm{min}})^2\tr(P_{\min})+\sum_{\lambda_i < 0,\lambda_i\neq\lambda_{\mathrm{min}}} (-\lambda_i)^2 \tr(P_i)\\
&\geq (-\lambda_{\mathrm{min}})^2\tr(P_{\min})=\dcp(\Phi)^2 \cdot k.
\end{align*}
This implies the desired inequality \eqref{main1} in the case where $\Phi^{(1)}=\Phi^+$ and $\Phi^{(2)}=\Phi^-$.

Now consider an arbitrary decomposition \(\Phi = \Phi^{(1)} - \Phi^{(2)}\), where \(\Phi^{(1)}\) and \(\Phi^{(2)}\) are completely positive maps with Choi matrices \(\mathbf{C}_{\Phi^{(1)}} \geq 0\) and \(\mathbf{C}_{\Phi^{(2)}} \geq 0\) and $\mathbf{C}_\Phi=\mathbf{C}_{\Phi^{(1)}}-\mathbf{C}_{\Phi^{(2)}}$.
It follows from Lemma~\ref{lm-1} that $\mathbf{C}_{\Phi}^{-}$ is the minimal positive semi-definite matrix satisfying $\mathbf{C}_{\Phi}+\mathbf{C}_{\Phi}^{-}\geq0$. Since $\mathbf{C}_{\Phi^{(2)}}\geq0$ and $\mathbf{C}_{\Phi}+\mathbf{C}_{\Phi^{(2)}}\geq0$, this implies that $\mathbf{C}_{\Phi^{(2)}}\geq \mathbf{C}_{\Phi}^{-}$. Consequently,
$$\|\Phi^{(2)}\|_{\HS}\geq \|\Phi^{-}\|_{\HS}\geq\dcp(\Phi) \cdot \sqrt{k} $$
as required.
\end{proof}

To provide an examples   illustrating   Theorem~\ref{thm:main}, we consider the transposition map, a well-known positive but not completely positive map, that can be analyzed using recent tools for constructing positive maps in matrix algebras \cite{Piotrowski2023}.

\begin{example}[Transposition Map]
Consider the transposition map \(\Phi: \mathbb{M}_2 \to \mathbb{M}_2\) defined by \(\Phi(A) = A^T\), where \(A^T\) is the transpose of \(A\). The Choi matrix is
\[
\mathbf{C}_\Phi = \sum_{i,j=1}^2 E_{ij} \otimes E_{ji} = \begin{bmatrix}
1 & 0 & 0 & 0 \\
0 & 0 & 1 & 0 \\
0 & 1 & 0 & 0 \\
0 & 0 & 0 & 1
\end{bmatrix},
\]
which is the swap operator in \(\mathbb{M}_2 \otimes \mathbb{M}_2\). Its eigenvalues are 1 (multiplicity 3) and \(-1\) (multiplicity 1), so
\[
\lambda_{\min}(\mathbf{C}_\Phi) = -1, \quad \dcp(\Phi) = 1, \quad k = 1.
\]
In the Jordan decomposition, \(\mathbf{C}_\Phi^- = P_{-}\), the projection onto the antisymmetric subspace, given by
\[
P_{-} = \frac{1}{2} (I_4 - \mathbf{C}_\Phi) = \begin{bmatrix}
0 & 0 & 0 & 0 \\
0 & 1 & -1 & 0 \\
0 & -1 & 1 & 0 \\
0 & 0 & 0 & 0
\end{bmatrix}.
\]
The Hilbert-Schmidt norm of \(\Phi^-\) is
\[
\|\Phi^-\|_{\HS} = \sqrt{\tr(P_{-}^2)} = \sqrt{\tr(P_{-})} = \sqrt{1} = 1.
\]
The bound is
\[
\dcp(\Phi) \cdot \sqrt{k} = 1 \cdot \sqrt{1} = 1,
\]
which is achieved, confirming sharpness. For any other decomposition \(\Phi = \Phi^{(1)} - \Phi^{(2)}\), such as \(\Phi^{(1)}(A) = A^T + \tr(A) I_2\), \(\Phi^{(2)}(A) = \tr(A) I_2\), we compute \(\|\Phi^{(2)}\|_{\HS} = \sqrt{4} = 2 > 1\), satisfying the bound.
\end{example}

\begin{example}[Map with Higher Multiplicity]
Define \(\Phi: \mathbb{M}_2 \to \mathbb{M}_2\) by
\[
\Phi(A) = \begin{bmatrix}
a_{11} + a_{22} & -a_{12} \\
-a_{21} & a_{11} + a_{22}
\end{bmatrix},
\]
where \(A = [a_{ij}]\). The Choi matrix is computed as
\[
\mathbf{C}_\Phi = \begin{bmatrix}
2 & 0 & 0 & 0 \\
0 & 0 & -1 & 0 \\
0 & -1 & 0 & 0 \\
0 & 0 & 0 & 2
\end{bmatrix}.
\]
The eigenvalues are 2 (multiplicity 2), \(-1\) (multiplicity 2), so
\[
\lambda_{\min}(\mathbf{C}_\Phi) = -1, \quad \dcp(\Phi) = 1, \quad k = 2.
\]
The negative part is
\[
\mathbf{C}_\Phi^- = \begin{bmatrix}
0 & 0 & 0 & 0 \\
0 & 1 & -1 & 0 \\
0 & -1 & 1 & 0 \\
0 & 0 & 0 & 0
\end{bmatrix},
\]
with
\[
\|\Phi^-\|_{\HS} = \sqrt{\tr((\mathbf{C}_\Phi^-)^2)} = \sqrt{\tr(\mathbf{C}_\Phi^-)} = \sqrt{2}.
\]
The bound is
\[
\dcp(\Phi) \cdot \sqrt{k} = 1 \cdot \sqrt{2} = \sqrt{2},
\]
achieved in the Jordan decomposition, confirming sharpness.
\end{example}
A key question is how to approximate a Hermitian linear map with a completely positive one in a meaningful way. The next theorem   identifies the best such approximation in the Hilbert-Schmidt norm.

\begin{theorem}\label{thm:main2}
Let \(\Phi: \mathbb{M}_m \to \mathbb{M}_n\) be a Hermitian linear map with Jordan decomposition \(\Phi = \Phi^+ - \Phi^-\), where \(\Phi^+\) and \(\Phi^-\) are completely positive maps corresponding to the positive and negative parts of the Choi matrix \(\mathbf{C}_\Phi\), respectively. Then, the best approximation of \(\Phi\) by a completely positive map \(\Psi\) in the Hilbert-Schmidt norm is \(\Psi = \Phi^+\), and:
\begin{align}\label{th2-q1}
\min_{\Psi \text{ CP}} \|\Phi - \Psi\|_{HS} = \|\Phi - \Phi^+\|_{HS} = \|\Phi^-\|_{HS}.
\end{align}
\end{theorem}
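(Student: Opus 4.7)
The plan is to pass to Choi matrices and recast the statement as a classical Frobenius-norm best-approximation problem. By the Choi--Jamio\l kowski isomorphism together with the definition $\|\Phi\|_{HS} = \sqrt{\tr(\mathbf{C}_\Phi^* \mathbf{C}_\Phi)}$, we have $\|\Phi - \Psi\|_{HS} = \|\mathbf{C}_\Phi - \mathbf{C}_\Psi\|_F$, and $\Psi$ is CP exactly when $\mathbf{C}_\Psi \geq 0$ by Choi's theorem. Thus the theorem is equivalent to showing that among all positive semi-definite matrices $B \in \mathbb{M}_m \otimes \mathbb{M}_n$, the Frobenius-closest one to the Hermitian matrix $\mathbf{C}_\Phi$ is $\mathbf{C}_\Phi^+$, with minimum distance $\|\mathbf{C}_\Phi^-\|_F = \|\Phi^-\|_{HS}$.

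The main tool is the orthogonality $\mathbf{C}_\Phi^+ \mathbf{C}_\Phi^- = 0$ built into the Jordan decomposition. For arbitrary $B \geq 0$, writing $\mathbf{C}_\Phi - B = (\mathbf{C}_\Phi^+ - B) - \mathbf{C}_\Phi^-$ and taking the trace of the square yields
\begin{align*}
\|\mathbf{C}_\Phi - B\|_F^2
&= \|\mathbf{C}_\Phi^+ - B\|_F^2 + \|\mathbf{C}_\Phi^-\|_F^2 - 2\tr\bigl((\mathbf{C}_\Phi^+ - B)\,\mathbf{C}_\Phi^-\bigr) \\
&= \|\mathbf{C}_\Phi^+ - B\|_F^2 + \|\mathbf{C}_\Phi^-\|_F^2 + 2\tr(B\,\mathbf{C}_\Phi^-),
\end{align*}
where the last step uses $\tr(\mathbf{C}_\Phi^+ \mathbf{C}_\Phi^-) = 0$. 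Both correction terms are non-negative (the second because the trace of a product of two positive semi-definite matrices is non-negative), so $\|\mathbf{C}_\Phi - B\|_F^2 \geq \|\mathbf{C}_\Phi^-\|_F^2$. The choice $B = \mathbf{C}_\Phi^+$ makes the first term vanish and, by orthogonality, the second one as well, so the bound is attained and the minimum equals $\|\Phi^-\|_{HS}$.

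I do not anticipate a real obstacle here: once the orthogonality of the positive and negative parts is invoked, the argument is essentially a three-line algebraic identity. The only bookkeeping is that since both $\Phi$ and $\Psi$ are Hermitian, the difference $\Phi - \Psi$ is Hermitian too, so its Hilbert--Schmidt norm coincides with the Frobenius norm of the Hermitian Choi matrix $\mathbf{C}_\Phi - \mathbf{C}_\Psi$ without any absolute-value subtleties. One could try instead to invoke Lemma~\ref{lm-1}, but that lemma yields only an order comparison after a PSD shift and does not directly control the quadratic HS quantity, so the direct Frobenius expansion above is cleaner.
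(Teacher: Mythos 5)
Your proposal is correct and follows essentially the same route as the paper: both reduce to Choi matrices, exploit the orthogonality $\mathbf{C}_\Phi^+\mathbf{C}_\Phi^-=0$, and arrive at the identity $\|\mathbf{C}_\Phi-\mathbf{C}_\Psi\|_F^2=\|\mathbf{C}_\Phi^+-\mathbf{C}_\Psi\|_F^2+\|\mathbf{C}_\Phi^-\|_F^2+2\tr(\mathbf{C}_\Phi^-\mathbf{C}_\Psi)$ with both correction terms non-negative. Your grouping $(\mathbf{C}_\Phi^+-B)-\mathbf{C}_\Phi^-$ is marginally more direct than the paper's term-by-term expansion, but the argument is the same.
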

\begin{proof}
Consider \(\Psi = \Phi^+\), which is completely positive. Then $\Phi - \Phi^+ = -\Phi^-$ and
\[
\|\Phi - \Phi^+\|_{HS} = \|-\Phi^-\|_{HS} = \|\Phi^-\|_{HS},
\]
meaning that \(\Psi = \Phi^+\) satisfies \eqref{th2-q1}. Now, for any completely positive map \(\Psi\) with \(\mathbf{C}_\Psi \geq 0\), we aim to show:
$
\|\Phi - \Psi\|_{HS} \geq \|\Phi^-\|_{HS}.
$
Since  \(\mathbf{C}_\Phi = \mathbf{C}_\Phi^+ - \mathbf{C}_\Phi^-\), we have
\begin{align}\label{th2-q2}
\operatorname{tr}(\mathbf{C}_\Phi \mathbf{C}_\Psi) = \operatorname{tr}( (\mathbf{C}_\Phi^+ - \mathbf{C}_\Phi^-) \mathbf{C}_\Psi ) = \operatorname{tr}(\mathbf{C}_\Phi^+ \mathbf{C}_\Psi) - \operatorname{tr}(\mathbf{C}_\Phi^- \mathbf{C}_\Psi).
\end{align}
Moreover, we conclude  from \(\mathbf{C}_\Phi^+ \mathbf{C}_\Phi^- = 0\) that
\[
\mathbf{C}_\Phi^2 = (\mathbf{C}_\Phi^+ - \mathbf{C}_\Phi^-)^2 = (\mathbf{C}_\Phi^+)^2 + (\mathbf{C}_\Phi^-)^2,
\]
and so
\begin{align}\label{th2-q3}
\operatorname{tr}(\mathbf{C}_\Phi^2) = \operatorname{tr}( (\mathbf{C}_\Phi^+)^2 ) + \operatorname{tr}( (\mathbf{C}_\Phi^-)^2 ).
\end{align}
Now, computing the Hilbert-Schmit norm we have
\begin{align*}
\|\Phi - \Psi\|_{HS}^2 &= \operatorname{tr}\big( (\mathbf{C}_\Phi - \mathbf{C}_\Psi)^2 \big)\\
 &= \operatorname{tr}(\mathbf{C}_\Phi^2) + \operatorname{tr}(\mathbf{C}_\Psi^2) - 2 \operatorname{tr}(\mathbf{C}_\Phi \mathbf{C}_\Psi)\\
&=\operatorname{tr}( (\mathbf{C}_\Phi^+)^2 ) + \operatorname{tr}( (\mathbf{C}_\Phi^-)^2 )+ \operatorname{tr}(\mathbf{C}_\Psi^2)- 2 \operatorname{tr}(\mathbf{C}_\Phi^+ \mathbf{C}_\Psi) + 2 \operatorname{tr}(\mathbf{C}_\Phi^- \mathbf{C}_\Psi)\\
&\qquad\qquad\qquad\qquad\qquad\qquad(\mbox{by \eqref{th2-q2} and \eqref{th2-q3}})\\
&=\operatorname{tr}\big( (\mathbf{C}_\Phi^+ - \mathbf{C}_\Psi)^2 \big)+\operatorname{tr}( (\mathbf{C}_\Phi^-)^2 )+ 2 \operatorname{tr}(\mathbf{C}_\Phi^- \mathbf{C}_\Psi).
\end{align*}
Both \(\mathbf{C}_\Phi^- \geq 0\) and \(\mathbf{C}_\Psi \geq 0\), and so \(\operatorname{tr}(\mathbf{C}_\Phi^- \mathbf{C}_\Psi) \geq 0\). Moreover, as the trace of a square is non-negative, we have $\operatorname{tr}\big( (\mathbf{C}_\Phi^+ - \mathbf{C}_\Psi)^2 \big)\geq0$.
Hence
\[
\|\Phi - \Psi\|_{HS}^2 \geq \operatorname{tr}( (\mathbf{C}_\Phi^-)^2 ) = \|\mathbf{C}_\Phi^-\|_{HS}^2 = \|\Phi^-\|_{HS}^2.
\]

Equality holds when:

\[
\operatorname{tr}\big( (\mathbf{C}_\Phi^+ - \mathbf{C}_\Psi)^2 \big) = 0 \quad \text{and} \quad \operatorname{tr}(\mathbf{C}_\Phi^- \mathbf{C}_\Psi) = 0.
\]

The first implies \(\mathbf{C}_\Phi^+ = \mathbf{C}_\Psi\), i.e., \(\Psi = \Phi^+\). The second holds because \(\mathbf{C}_\Phi^- \mathbf{C}_\Phi^+ = 0\), confirming that \(\Psi = \Phi^+\) achieves the minimum.

Thus, \(\Psi = \Phi^+\) is the best approximation, with minimal distance \(\|\Phi^-\|_{HS}\).
\end{proof}

\begin{example}
Assume that \(\Phi: \mathbb{M}_2 \to \mathbb{M}_2\) is defined by $\Phi(A) = k \operatorname{tr}(A) I$,
where \(k < 0\). Then
\[
\mathbf{C}_\Phi = \sum_{i,j=1}^2 E_{ij} \otimes \Phi(E_{ij}) = k \sum_{i=1}^2 E_{ii} \otimes I = k (E_{11} + E_{22}) \otimes I = k I_2 \otimes I_2 = k I_4.
\]
Since \(k < 0\), \(\mathbf{C}_\Phi\) is negative definite and we have
\[
\mathbf{C}_\Phi^+ = 0, \quad \mathbf{C}_\Phi^- = -k I_4,
\]
since \(-k > 0\). Thus, \(\Phi^+ = 0\) (the zero map), and \(\Phi^- = -\Phi\), where \(\Phi^-(A) = (-k) \operatorname{tr}(A) I\), which is completely positive.
\[
\|\Phi - \Phi^+\|_{HS} = \|\Phi - 0\|_{HS} = \|k I_4\|_{HS} = |k| \sqrt{\operatorname{tr}(I_4)} = 2 |k|,
\]

\[
\|\Phi^-\|_{HS} = \|-k I_4\|_{HS} = (-k) \sqrt{4} = 2 |k|,
\]

which matches.
\end{example}


Having established the optimal approximation of Hermitian maps by completely positive maps in the Hilbert-Schmidt norm, we now address the problem of extending a Hermitian linear map to a completely positive map using an auxiliary space. This question, motivated by quantum broadcasting and dilation techniques \cite{parzygnat2023}, is critical for applications in quantum information processing, such as channel correction and entanglement manipulation. We examine the minimal dimension of the auxiliary space required for such an extension, providing an explicit construction that leverages the spectral properties of the Choi matrix.

To prevent ambiguity, we use $\tr_m(\cdot)$ to denote the partial trace over the space $\mathbb{M}_m$.
\begin{theorem}\label{th3:main}
Let \(\Phi: \mathbb{M}_m \rightarrow \mathbb{M}_n\) be a Hermitian  linear map with Choi matrix \(\mathbf{C}_{\Phi} \in \mathbb{M}_m \otimes \mathbb{M}_n\). There exists an integer \(k=k_{\min}\), a Hermitian matrix \(Q \in \mathbb{M}_k\),  and a completely positive map \(\Psi: \mathbb{M}_m \otimes \mathbb{M}_k \rightarrow \mathbb{M}_n \otimes \mathbb{M}_k\) extending $\Phi$ in the sense that
\begin{align}\label{rep}
\Phi(X) = \mathrm{tr}_k \left[ \Psi(X \otimes I_k) (I_n \otimes Q) \right] \quad \forall X \in \mathbb{M}_m.
\end{align}
 The minimal dimension \(k_{\min}\) equals the rank of \(\mathbf{C}_{\Phi}\).
\end{theorem}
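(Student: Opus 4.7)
The plan is to split the proof into a construction realising $k=\operatorname{rank}(\mathbf{C}_\Phi)$ and a matching lower bound, the latter leaning on Lemma~\ref{lm-1}.

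For the construction, I would start from the spectral decomposition of the Hermitian Choi matrix. Writing $\mathbf{C}_\Phi=\sum_{i=1}^r \lambda_i |v_i\rangle\langle v_i|$ with $r=\operatorname{rank}(\mathbf{C}_\Phi)$ and all $\lambda_i\neq 0$, I would set $\epsilon_i=\operatorname{sgn}(\lambda_i)$ and $|u_i\rangle=\sqrt{|\lambda_i|}\,|v_i\rangle$, so that $\mathbf{C}_\Phi=\sum_i \epsilon_i |u_i\rangle\langle u_i|$. The Choi--Jamio{\l}kowski correspondence translates the $|u_i\rangle$ into matrices $U_i\in\mathbb{M}_{n,m}$ giving the Hermitian Kraus-type identity $\Phi(X)=\sum_{i=1}^r \epsilon_i U_i X U_i^*$. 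Taking $k=r$, setting $Q=\operatorname{diag}(\epsilon_1,\ldots,\epsilon_r)\in\mathbb{M}_r$, and defining $\Psi:\mathbb{M}_m\otimes\mathbb{M}_r\to\mathbb{M}_n\otimes\mathbb{M}_r$ via the Kraus operators $\{U_i\otimes|i\rangle\langle i|\}_{i=1}^r$ yields a manifestly CP map. A direct calculation, using $(I_n\otimes|i\rangle\langle i|)(I_n\otimes Q)=\epsilon_i(I_n\otimes|i\rangle\langle i|)$ together with $\tr_r[A\otimes|i\rangle\langle i|]=A$, then verifies the representation \eqref{rep}.

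For the lower bound, I would start from any valid triple $(k,Q,\Psi)$ and pass to the Jordan decomposition $Q=Q_+-Q_-$. The two maps
\[
\Phi_+(X):=\tr_k[\Psi(X\otimes I_k)(I_n\otimes Q_+)],\qquad \Phi_-(X):=\tr_k[\Psi(X\otimes I_k)(I_n\otimes Q_-)]
\]
are both completely positive, since $Q_\pm\geq 0$, $\Psi$ is CP, and the partial trace against a positive semidefinite element in the second factor preserves complete positivity (after the standard $I_j\otimes\cdot$ check). Because $\Phi=\Phi_+-\Phi_-$, we have $\mathbf{C}_\Phi=\mathbf{C}_{\Phi_+}-\mathbf{C}_{\Phi_-}$, and Lemma~\ref{lm-1} (applied exactly as in the proof of Theorem~\ref{thm:main}) gives $\mathbf{C}_{\Phi_\pm}\geq \mathbf{C}_\Phi^\pm$, hence $\operatorname{rank}(\mathbf{C}_{\Phi_\pm})\geq\operatorname{rank}(\mathbf{C}_\Phi^\pm)$. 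Summing the two inequalities yields $\operatorname{rank}(\mathbf{C}_\Phi)=\operatorname{rank}(\mathbf{C}_\Phi^+)+\operatorname{rank}(\mathbf{C}_\Phi^-)\leq \operatorname{rank}(\mathbf{C}_{\Phi_+})+\operatorname{rank}(\mathbf{C}_{\Phi_-})$.

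The remaining step, which I expect to be the main technical obstacle, is the matching upper bound $\operatorname{rank}(\mathbf{C}_{\Phi_+})+\operatorname{rank}(\mathbf{C}_{\Phi_-})\leq k$. The natural route is to diagonalise $Q_\pm=\sum_{j=1}^{r_\pm}q_j^\pm|f_j^\pm\rangle\langle f_j^\pm|$, so that $\Phi_\pm(X)=\sum_j q_j^\pm (I_n\otimes\langle f_j^\pm|)\Psi(X\otimes I_k)(I_n\otimes|f_j^\pm\rangle)$ splits into $r_\pm=\operatorname{rank}(Q_\pm)$ CP contributions, and to argue that, because each contribution enters $\Phi_\pm$ only through a one-dimensional slice of the auxiliary space, the Choi rank of $\Phi_\pm$ is controlled by $r_\pm$. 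This would give $\operatorname{rank}(\mathbf{C}_\Phi)\leq r_++r_-=\operatorname{rank}(Q)\leq k$, completing the proof. Making this slice-by-slice rank bound precise is the delicate point on which minimality hinges; it will most likely require choosing $\Psi$ in a canonical (e.g.\ Stinespring-minimal) form and exploiting the tensor identity of $X\otimes I_k$ to prevent the Kraus decompositions of the individual slices from contributing more dimensions than the $r_\pm$ ``channels'' made available by the eigenvectors of $Q_\pm$.
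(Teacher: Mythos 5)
Your construction of $\Psi$ and $Q$ for $k=\operatorname{rank}(\mathbf{C}_\Phi)$ is correct and is essentially the paper's argument (the paper keeps the weights $|\lambda_i|$ outside the Kraus operators rather than absorbing $\sqrt{|\lambda_i|}$ into them; this is immaterial). The problem is the minimality half. You have correctly isolated the missing step, namely $\operatorname{rank}(\mathbf{C}_{\Phi_+})+\operatorname{rank}(\mathbf{C}_{\Phi_-})\leq k$, but this step is not merely delicate --- it is false, and no choice of canonical form for $\Psi$ will rescue it. The slice $X\mapsto (I_n\otimes f_j^*)\,\Psi(X\otimes I_k)\,(I_n\otimes f_j)$ attached to a single eigenvector $f_j$ of $Q_\pm$ is a CP map whose Kraus operators are all the compressions $(I_n\otimes f_j^*)K_\alpha(I_m\otimes e_l)$, $l=1,\dots,k$, of all the Kraus operators $K_\alpha$ of $\Psi$; its Choi rank can therefore be as large as $mn$, not $1$. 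So the eigenvectors of $Q$ do not act as single ``channels'' and the sum of slice ranks is not controlled by $k$.

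In fact the minimality assertion of the theorem itself fails for $\operatorname{rank}(\mathbf{C}_\Phi)>2$, which is why your program cannot be completed. Take $k=2$, $Q=e_1e_1^*-e_2e_2^*$, and let $\{A_i\}$, $\{B_j\}$ be Kraus operators for $\Phi^+$ and $\Phi^-$ (the Jordan parts of $\Phi$). Define $\Psi$ by the Kraus operators $\{A_i\otimes e_1e_1^*\}\cup\{B_j\otimes e_2e_2^*\}$. Then
\[
\Psi(X\otimes I_2)=\Phi^+(X)\otimes e_1e_1^*+\Phi^-(X)\otimes e_2e_2^*,
\]
so $\tr_2\left[\Psi(X\otimes I_2)(I_n\otimes Q)\right]=\Phi^+(X)-\Phi^-(X)=\Phi(X)$, giving a valid representation with $k=2$ for every Hermitian $\Phi$ (and $k=1$ suffices whenever $\Phi$ or $-\Phi$ is already CP, taking $Q=\pm1$ and $\Psi=\pm\Phi$). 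Note that your lower bound $\operatorname{rank}(\mathbf{C}_\Phi)\leq\operatorname{rank}(\mathbf{C}_{\Phi_+})+\operatorname{rank}(\mathbf{C}_{\Phi_-})$ via Lemma~\ref{lm-1} is itself correct; it simply bounds the Choi ranks of the two CP pieces, not the auxiliary dimension. Be aware that the paper does not close this gap either: its proof of minimality consists of the single unsubstantiated sentence that $k=r$ ``corresponds to the number of nonzero eigenvalues.'' The honest conclusion is that the existence statement with $k=\operatorname{rank}(\mathbf{C}_\Phi)$ is proved, while the claim $k_{\min}=\operatorname{rank}(\mathbf{C}_\Phi)$ is unproved and, absent further constraints on $\Psi$ or $Q$, false.
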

\begin{proof}
 Since \(\Phi\) is a  Hermitian map, \(\mathbf{C}_{\Phi}\) is a Hermitian matrix. Compute its spectral decomposition:
\[
\mathbf{C}_{\Phi} = \sum_{i=1}^r \lambda_i u_i u_i^*,
\]
where \(\lambda_i \in \mathbb{R}\), \(\{u_i\}_{i=1}^r\) are orthonormal vectors in \(\mathbb{C}^{m n}\), and \(r = \text{rank}(\mathbf{C}_{\Phi})\).

Choose the dimension of the auxiliary space as \(k = r\), so the auxiliary space is \(\mathbb{C}^k\), and let \(\{e_i\}_{i=1}^r\) be an orthonormal basis for \(\mathbb{C}^k\).

We define  \(\Psi: \mathbb{M}_m \otimes \mathbb{M}_k \rightarrow \mathbb{M}_n \otimes \mathbb{M}_k\)  by
\[
\Psi(Y) = \sum_{i=1}^r |\lambda_i| (A_i \otimes e_i) Y (A_i^* \otimes e_i^*)\qquad Y\in\mathbb{M}_m \otimes \mathbb{M}_k,
\]
where each \(A_i: \mathbb{C}^m \rightarrow \mathbb{C}^n\) is a linear operator derived from \(u_i\) via the isomorphism \(\mathbb{C}^m \otimes \mathbb{C}^n \cong \mathbb{M}_{n \times m}\). In other words, $\mathrm{vec}(A_i)=u_i$ for $i=1,\dots,r$.  It is evident that $\Phi$ is completely positive.
Now we define \(Q \in \mathbb{M}_k\) by
\[
Q = \sum_{i=1}^r \text{sgn}(\lambda_i) e_i e_i^*,
\]
where \(\text{sgn}(\lambda_i) = 1\) if \(\lambda_i > 0\), \(-1\) if \(\lambda_i < 0\), and 0 if \(\lambda_i = 0\). Since \(\text{sgn}(\lambda_i) \in \mathbb{R}\), \(Q = Q^*\), so \(Q\) is Hermitian.
For every  \(X \in \mathbb{M}_m\) we have
\[
\Psi(X \otimes I_k) = \sum_{i=1}^r |\lambda_i| (A_i \otimes e_i) (X \otimes I_k) (A_i^* \otimes e_i^*)=\sum_{i=1}^r |\lambda_i| (A_i X A_i^*) \otimes e_i e_i^*
\]
and so
\begin{align*}
  \Psi(X \otimes I_k) (I_n \otimes Q) &= \left( \sum_{i=1}^r |\lambda_i| (A_i X A_i^*) \otimes e_i e_i^* \right) \left( I_n \otimes \sum_{j=1}^r \text{sgn}(\lambda_j) e_j e_j^* \right)\\
  &= \sum_{i=1}^r |\lambda_i| \text{sgn}(\lambda_i) (A_i X A_i^*) \otimes e_i e_i^*,
\end{align*}
using the orthogonality of \(\{e_i\}\).
Taking the partial trace over \(\mathbb{M}_k\) we conclude
\begin{align*}
\text{tr}_k \left[\Psi(X \otimes I_k) (I_n \otimes Q)\right]&=
\text{tr}_k \left[ \sum_{i=1}^r |\lambda_i| \text{sgn}(\lambda_i) (A_i X A_i^*) \otimes e_i e_i^* \right] \\
&= \sum_{i=1}^r |\lambda_i| \text{sgn}(\lambda_i) (A_i X A_i^*) \text{tr}(e_i e_i^*) = \sum_{i=1}^r \lambda_i (A_i X A_i^*).
\end{align*}
On the other hand, we show that  \(\Phi(X) = \sum_{i=1}^r \lambda_i A_i X A_i^*\) and this will confirm  the representation \eqref{rep}.

The action of \(\Phi\) on any matrix \(X \in \mathbb{M}_m\) can be expressed as
\[
\Phi(X) = \text{tr}_m \left[ (X^T \otimes I_n) \mathbf{C}_\Phi \right],
\]
where \(\text{tr}_m\) denotes the partial trace over \(\mathbb{M}_m\), \(X^T\) is the transpose of \(X\)  (see Watrous \cite{watrous2018}). Substitute the spectral decomposition \(\mathbf{C}_\Phi = \sum_{i=1}^r \lambda_i u_i u_i^*\) we obtain
\begin{align}\label{hgt1}
\begin{split}
\Phi(X) &= \text{tr}_m \left[ (X^T \otimes I_n) \sum_{i=1}^r \lambda_i u_i u_i^* \right]\\
&=\sum_{i=1}^r \lambda_i \text{tr}_m \left[ (X^T \otimes I_n) u_i u_i^* \right]\\
&=\sum_{i=1}^r \lambda_i \text{tr}_m \left[ (X^T \otimes I_n) \mathrm{vec}(A_i) \mathrm{vec}(A_i)^*\right],
\end{split}
\end{align}
since $u_i=\mathrm{vec}(A_i)$.
Using the identity \((A \otimes B) \text{vec}(C) = \text{vec}(B C A^T)\) from Horn and Johnson \cite{horn1991}, we have
\[
(X^T \otimes I_n) \text{vec}(A_i) = \text{vec}(I_n A_i (X^T)^T) = \text{vec}(A_i X).
\]
Hence
\begin{align}\label{hgt2}
\text{tr}_m \left[ (X^T \otimes I_n) \mathrm{vec}(A_i) \mathrm{vec}(A_i)^*\right]=\text{tr}_m \left[  \text{vec}(A_i X)\mathrm{vec}(A_i)^*\right]=A_iXA_i^*,
\end{align}
in which the last equality follows from the fact that (see Bengtsson and Życzkowski \cite{bengtsson2006}) for \(B, C \in \mathbb{M}_{n \times m}\):
\[
\text{tr}_m \left[ \text{vec}(B) \text{vec}(C)^* \right] = B C^*.
\]
Combining \eqref{hgt1} and \eqref{hgt2}  we arrive at the desired representation for $\Phi$.

The dimension \(k = r\) is minimal, as it corresponds to the number of nonzero eigenvalues in the spectral decomposition of \(\mathbf{C}_{\Phi}\), ensuring the smallest possible auxiliary space.

\end{proof}

 Before giving some examples, we point out a  fact in the proof of Theorem~\ref{th3:main} which can be helpful in computations. We showed  that if \(\Phi: \mathbb{M}_m \rightarrow \mathbb{M}_n\) is Hermitian, then
 $$\Phi(X)=\sum_{j=1}^{nm}\lambda_i A_i X A_i^*,$$
in which $\lambda_i$'s are eigenvalues of the Choi matrix $\mathbf{C}_{\Phi} $ of $\Phi$ and $u_i=\text{vec}(A_i)$ are the corresponding eigenvectors.

We now present examples that illustrate the construction of the completely positive map $\Psi$ and the Hermitian matrix $Q$ for a given Hermitian map $\Phi$.
\begin{example}
Consider \(\Phi: \mathbb{M}_2 \rightarrow \mathbb{M}_2\) defined by \(\Phi(X) = X + X^*\). The Choi matrix is:
\[
\mathbf{C}_{\Phi} = \sum_{i,j=1}^2 E_{ij} \otimes (E_{ij} + E_{ji})=\begin{bmatrix}
2 & 0 & 0 & 1 \\
0 & 0 & 1 & 0 \\
0 & 1 & 0 & 0 \\
1 & 0 & 0 & 2
\end{bmatrix},
\]
where the standard basis ordering \(\{E_{11}, E_{12}, E_{21}, E_{22}\}\) for \(\mathbb{M}_4\)  is considered. Computing eigenvalues, we have \(\lambda = 3, 1, -1\) (with 1 having multiplicity 2). The corresponding eigenvectors are \\
 \(\frac{1}{\sqrt{2}}(1, 0, 0, 1)^T\) for \(\lambda = 3\);
\(\frac{1}{\sqrt{2}}(1, 0, 0, -1)^T\) and \(\frac{1}{\sqrt{2}}(0, 1, 1, 0)^T\) for  \(\lambda = 1\);  and
 \(\frac{1}{\sqrt{2}}(0, 1, -1, 0)^T\) for  \(\lambda = -1\).

Accordingly, we  define the operators \(A_i \in \mathbb{M}_2\) based on the eigenvectors:
\begin{align*}
A_1 &= \frac{1}{\sqrt{2}}(E_{11} + E_{22}), \qquad \qquad A_2 = \frac{1}{\sqrt{2}}(E_{11} - E_{22}),\\
A_3 &= \frac{1}{\sqrt{2}}(E_{12} + E_{21}),  \qquad \qquad A_4 = \frac{1}{\sqrt{2}}(E_{12} - E_{21}).
\end{align*}
WE define the completely positive map  \(\Psi: \mathbb{M}_2 \otimes \mathbb{M}_4 \to \mathbb{M}_2 \otimes \mathbb{M}_4\) using the eigenvalues as
\[
\Psi(Y) = 3 (A_1 \otimes e_1) Y (A_1^* \otimes e_1^*) + \sum_{i=2}^4 (A_i \otimes e_i) Y (A_i^* \otimes e_i^*),
\]
where \(\{e_i\}_{i=1}^4\) is the standard basis of \(\mathbb{C}^4\). Define the Hermitian matrix \(Q \in \mathbb{M}_4\):
\[
Q =e_1e_1^*+ e_2e_2^*+e_3e_3^*-e_4e_4^*=\text{diag}(1, 1, 1, -1),
\]
reflecting the signs of the eigenvalues.

\end{example}

When the Choi matrix \(\mathbf{C}_\Phi\) exhibits a block-diagonal structure, the Hermitian map can be decomposed into submaps, each potentially associated with a lower-rank Choi matrix, indicating that a smaller auxiliary space may be sufficient for the extension.

\begin{theorem}\label{1thm:reduced-cp-extension}
Let $\Phi: \mathbb{M}_m \to \mathbb{M}_n$ be a Hermitian linear map with Choi matrix $\mathbf{C}_\Phi \in \mathbb{M}_m \otimes \mathbb{M}_n$ of rank $r$. Suppose $\mathbf{C}_\Phi$ is block-diagonal and $k$ is the maximum rank of its diagonal blocks. Then there exist   a completely positive map $\Psi: \mathbb{M}_m \otimes \mathbb{M}_k \to \mathbb{M}_n \otimes \mathbb{M}_k$, and a Hermitian matrix $Q \in \mathbb{M}_k$ such that:
\[
\Phi(X) = \operatorname{tr}_k\left[\Psi(X \otimes I_k)(I_n \otimes Q)\right] \quad \forall X \in \mathbb{M}_m,
\]
provided the submaps $\Phi_i$ are compatible with a common auxiliary space of dimension $k$. The minimal dimension $k_{\min}$ satisfies $k_{\min} \leq r$, with equality if and only if $\mathbf{C}_\Phi$ is indecomposable (i.e., $b=1$ or $r_i = r$ for some $i$).
\end{theorem}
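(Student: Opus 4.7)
The plan is to exploit the block-diagonal structure of $\mathbf{C}_\Phi$ to split $\Phi$ into independent submaps, extend each by the construction in Theorem~\ref{th3:main}, and then glue the extensions along a single auxiliary space of dimension $k$. Write $\mathbf{C}_\Phi = \bigoplus_{i=1}^{b} B_i$ with $\mathrm{rank}(B_i)=r_i$ and $k=\max_i r_i$. The block decomposition induces a corresponding decomposition $\Phi=\sum_{i=1}^{b}\Phi_i$ of Hermitian maps, where $\mathbf{C}_{\Phi_i}$ is supported in the $i$th block (viewed as a matrix in $\mathbb{M}_m\otimes\mathbb{M}_n$ by zero-padding). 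Since $r=\sum_i r_i$, we immediately get $k\le r$, with equality iff a single block carries all the rank, which is the two-sided condition in the statement.

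Next, for each $i$, invoke the spectral decomposition $B_i=\sum_{j=1}^{r_i}\lambda_{i,j}u_{i,j}u_{i,j}^*$ and define operators $A_{i,j}\in\mathbb{M}_{n\times m}$ by $\mathrm{vec}(A_{i,j})=u_{i,j}$. The remark after Theorem~\ref{th3:main} then yields the Kraus-like expression
\[
\Phi_i(X)=\sum_{j=1}^{r_i}\lambda_{i,j}\,A_{i,j} X A_{i,j}^*, \qquad \Phi(X)=\sum_{i=1}^{b}\sum_{j=1}^{r_i}\lambda_{i,j}\,A_{i,j} X A_{i,j}^*.
\]
This is the representation that a candidate extension must reproduce through the formula $\mathrm{tr}_k[\Psi(X\otimes I_k)(I_n\otimes Q)]$.

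With a common auxiliary basis $\{e_1,\dots,e_k\}$ for $\mathbb{C}^k$, the natural choice is
\[
\Psi(Y)=\sum_{i=1}^{b}\sum_{j=1}^{r_i}|\lambda_{i,j}|\,(A_{i,j}\otimes e_j)\,Y\,(A_{i,j}^*\otimes e_j^*),\qquad
Q=\sum_{j=1}^{k} s_j\, e_j e_j^*,
\]
with a single sign $s_j\in\{-1,0,1\}$ assigned to each slot $j$. Evaluating $\mathrm{tr}_k[\Psi(X\otimes I_k)(I_n\otimes Q)]$ exactly as in the proof of Theorem~\ref{th3:main} produces $\sum_{i,j} s_j|\lambda_{i,j}|A_{i,j}XA_{i,j}^*$, so the representation \eqref{rep} will hold precisely when we can reorder the eigenvalues within each block so that $s_j=\mathrm{sgn}(\lambda_{i,j})$ whenever both are nonzero. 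This reordering is possible when, for every slot $j\le k$, the signs $\{\mathrm{sgn}(\lambda_{i,j})\}_{i}$ do not conflict; this is exactly the compatibility hypothesis in the theorem, and it is automatic when $k_{\min}=r$ because every slot is then used by a single block.

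The main obstacle is precisely this compatibility: without it, squeezing all blocks into $k$ slots forces sign disagreements that cannot be repaired by any single diagonal $Q$. I would address it by grouping positive and negative eigenvalues of each block separately and matching the two columns across blocks from largest to smallest index, observing that the total positive (resp.\ negative) signature across blocks is bounded by $k$ exactly when the blocks align. The minimality statement $k_{\min}\le r$ is then immediate from the construction, and the ``if and only if'' for equality follows because any block whose rank is strictly smaller than $r$ can, together with the remaining blocks, be re-accommodated in fewer than $r$ slots, while an indecomposable or rank-saturating block forces the full dimension $r$ as in Theorem~\ref{th3:main}.
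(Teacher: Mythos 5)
Your skeleton is the same as the paper's: split $\mathbf{C}_\Phi$ into blocks, apply Theorem~\ref{th3:main} to each submap $\Phi_i$ to get Kraus operators $A_{i,j}$ with weights $|\lambda_{i,j}|$, and pack all blocks into a shared auxiliary space of dimension $k=\max_i r_i$ using the same slot basis $\{e_j\}$. Where you diverge is the definition of $Q$, and here your version is actually the one under which the computation closes. The paper sets $Q=\sum_i V_iQ_iV_i^*$, so the $(j,j)$ entry of $Q$ is the \emph{sum} $\sum_i\operatorname{sgn}(\lambda_{i,j})$ over all blocks occupying slot $j$ (the paper's own Example~\ref{ex_reduc} produces $Q=\operatorname{diag}(2,0,1,-1)$); since $Q$ acts only on the auxiliary factor, this accumulated coefficient multiplies \emph{every} block's contribution in slot $j$, and the paper's displayed product formula silently discards the cross-block terms $i\neq i'$. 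Your $Q=\sum_j s_je_je_j^*$ with a single sign per slot, together with the explicit requirement $s_j=\operatorname{sgn}(\lambda_{i,j})$ for every block $i$ using slot $j$, is the correct reading of the theorem's otherwise undefined ``compatibility'' proviso, and under that hypothesis your verification of the representation is sound.

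Two gaps remain, though both are shared with the paper. First, your claim that a reordering of eigenvalues within each block can always enforce sign agreement is false in general for $k=\max_i r_i$: if block $i$ has $p_i$ positive and $q_i$ negative eigenvalues, a diagonal $Q$ of this form exists exactly when the slots can be two-colored consistently, which requires $\max_i p_i+\max_i q_i\leq k$ (e.g.\ two rank-$4$ blocks with inertias $(3,1)$ and $(1,3)$ need $6$ slots, not $4$). Your ``match positives from one end, negatives from the other'' idea is the right device, but it proves compatibility only under that inertia bound, not unconditionally; since the theorem hedges with ``provided the submaps are compatible,'' this is acceptable, but you should state the condition rather than suggest it always holds. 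Second, neither you nor the paper proves the \emph{minimality} assertions: the upper bound $k_{\min}\leq r$ follows from the construction, but the claim that equality holds iff $\mathbf{C}_\Phi$ is indecomposable needs a lower-bound argument showing no smaller auxiliary space admits such a representation, and ``can be re-accommodated in fewer than $r$ slots'' only establishes the easy direction.
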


\begin{proof}
Since   $\mathbf{C}_\Phi \in \mathbb{M}_m \otimes \mathbb{M}_n$ is block-diagonal,  there exist orthogonal subspaces $\mathcal{H}_{m_i} \subseteq \mathbb{C}^m$ and $\mathcal{K}_{n_i} \subseteq \mathbb{C}^n$ such that $\mathbb{C}^m = \bigoplus_{i=1}^b \mathcal{H}_{m_i}$, $\mathbb{C}^n = \bigoplus_{i=1}^b \mathcal{K}_{n_i}$, with $\dim(\mathcal{H}_{m_i}) = m_i$, $\dim(\mathcal{K}_{n_i}) = n_i$, $\sum m_i = m$, and $\sum n_i = n$. The matrix algebra $\mathbb{M}_m$ decomposes as $\mathbb{M}_m \cong \bigoplus_{i=1}^b \mathbb{M}_{m_i}$, and similarly $\mathbb{M}_n \cong \bigoplus_{i=1}^b \mathbb{M}_{n_i}$. The Choi matrix is:
\[
\mathbf{C}_\Phi = \bigoplus_{i=1}^b \mathbf{C}_{\Phi_i}, \quad \mathbf{C}_{\Phi_i} \in \mathbb{M}_{m_i} \otimes \mathbb{M}_{n_i}.
\]
 Each $\Phi_i: \mathbb{M}_{m_i} \to \mathbb{M}_{n_i}$ is a Hermitian linear map and the rank of $\mathbf{C}_\Phi$ is $r = \sum_{i=1}^b r_i$, where $r_i = \operatorname{rank}(\mathbf{C}_{\Phi_i})$.

We apply Theorem~\ref{th3:main} for  each submap $\Phi_i$. Since $\mathbf{C}_{\Phi_i}$ is Hermitian with rank $r_i$, there exists a completely positive map $\Psi_i: \mathbb{M}_{m_i} \otimes \mathbb{M}_{r_i} \to \mathbb{M}_{n_i} \otimes \mathbb{M}_{r_i}$ and a Hermitian matrix $Q_i \in \mathbb{M}_{r_i}$ such that:
\[
\Phi_i(X_i) = \operatorname{tr}_{r_i}\left[\Psi_i(X_i \otimes I_{r_i})(I_{n_i} \otimes Q_i)\right] \quad \forall X_i \in \mathbb{M}_{m_i},\,\,(i=1,\dots,b).
\]
The spectral decomposition of $\mathbf{C}_{\Phi_i}$ is:
\[
\mathbf{C}_{\Phi_i} = \sum_{j=1}^{r_i} \lambda_{i,j} u_{i,j} u_{i,j}^*,
\]
where $\lambda_{i,j} \in \mathbb{R}$, $\{u_{i,j}\}_{j=1}^{r_i}$ are orthonormal vectors in $\mathbb{C}^{m_i n_i}$, and $\Psi_i$ is defined via Kraus operators $A_{i,j}: \mathbb{C}^{m_i} \to \mathbb{C}^{n_i}$ with $\operatorname{vec}(A_{i,j}) = u_{i,j}$, as:
\[
\Psi_i(Y_i) = \sum_{j=1}^{r_i} |\lambda_{i,j}| (A_{i,j} \otimes e_{i,j}) Y_i (A_{i,j}^* \otimes e_{i,j}^*),
\]
where $\{e_{i,j}\}_{j=1}^{r_i}$ is an orthonormal basis for $\mathbb{C}^{r_i}$. The matrix $Q_i = \sum_{j=1}^{r_i} \operatorname{sgn}(\lambda_{i,j}) e_{i,j} e_{i,j}^*$ is Hermitian.

Set $k = \max\{r_i : i = 1, \ldots, b\}$, the largest rank among the    Choi matrices of submaps $\Phi_i$. Let $\mathcal{K} = \mathbb{C}^k$ be the auxiliary Hilbert space with orthonormal basis $\{e_l\}_{l=1}^k$. For each $i$, define an isometry $V_i: \mathbb{C}^{r_i} \to \mathbb{C}^k$ by $V_i e_{i,j} = e_j$ for $j = 1, \ldots, r_i$, and extend to the remaining basis vectors arbitrarily (e.g., $V_i e_{i,j} = e_j$ for $j > r_i$ if $r_i < k$). The adjoint is $V_i^*: \mathbb{C}^k \to \mathbb{C}^{r_i}$, with $V_i^* V_i = I_{r_i}$.

Define     the   map $\Psi: \mathbb{M}_m \otimes \mathbb{M}_k \to \mathbb{M}_n \otimes \mathbb{M}_k$ by
\[
\Psi(Y) = \sum_{i=1}^b \sum_{j=1}^{r_i} |\lambda_{i,j}| (A_{i,j} \otimes V_i e_{i,j}) Y (A_{i,j}^* \otimes (V_i e_{i,j})^*).
\]
Since $V_i e_{i,j} = e_j$, we have $(A_{i,j} \otimes V_i e_{i,j}) = (A_{i,j} \otimes e_j)$, and the map is Completely positive, because it is a sum of Kraus operators. For $Y = X \otimes I_k$, compute:
\[
\Psi(X \otimes I_k) = \bigoplus_{i=1}^b \sum_{j=1}^{r_i} |\lambda_{i,j}| (A_{i,j} X_i A_{i,j}^*) \otimes e_j e_j^*,
\]
since $(A_{i,j} \otimes e_j)(X \otimes I_k) = (A_{i,j} X_i) \otimes e_j$.
Define $Q \in \mathbb{M}_k$  by
\[
Q = \sum_{i=1}^b V_i Q_i V_i^*,
\]
where $Q_i = \sum_{j=1}^{r_i} \operatorname{sgn}(\lambda_{i,j}) e_{i,j} e_{i,j}^*$. Since each $Q_i$ is Hermitian, and $V_i^* V_i = I_{r_i}$, $Q$ is a sum of Hermitian operators, hence Hermitian. Explicitly:
\[
V_i Q_i V_i^* = \sum_{j=1}^{r_i} \operatorname{sgn}(\lambda_{i,j}) V_i e_{i,j} (V_i e_{i,j})^* = \sum_{j=1}^{r_i} \operatorname{sgn}(\lambda_{i,j}) e_j e_j^*.
\]
Thus, $Q = \sum_{i=1}^b \sum_{j=1}^{r_i} \operatorname{sgn}(\lambda_{i,j}) e_j e_j^*\), where the sum over $j$ is restricted to the indices used by each block.

We have
\begin{align*}
\Psi(X \otimes I_k)(I_n \otimes Q) &= \bigoplus_{i=1}^b \sum_{j=1}^{r_i} |\lambda_{i,j}| (A_{i,j} X_i A_{i,j}^*) \otimes e_j e_j^* \cdot \bigoplus_{i'=1}^b \sum_{j'=1}^{r_{i'}} \operatorname{sgn}(\lambda_{i',j'}) I_{n_i} \otimes e_{j'} e_{j'}^*\\
&=\bigoplus_{i=1}^b \sum_{j=1}^{r_i} |\lambda_{i,j}| \operatorname{sgn}(\lambda_{i,j}) (A_{i,j} X_i A_{i,j}^*) \otimes e_j e_j^*.
\end{align*}

Taking the partial trace over $\mathbb{M}_k$ we get
\[
\operatorname{tr}_k\left[\Psi(X \otimes I_k)(I_n \otimes Q)\right] = \bigoplus_{i=1}^b \sum_{j=1}^{r_i} \lambda_{i,j} (A_{i,j} X_i A_{i,j}^*) = \bigoplus_{i=1}^b \Phi_i(X_i) = \Phi(X),
\]
since $\Phi(X) = \bigoplus_{i=1}^b \Phi_i(X_i)$ and $\Phi_i(X_i) = \sum_{j=1}^{r_i} \lambda_{i,j} A_{i,j} X_i A_{i,j}^*$  as in Theorem~\ref{th3:main}.

\end{proof}

\begin{example}\label{ex_reduc}
Consider the Hermitian linear map $\Phi: \mathbb{M}_4 \to \mathbb{M}_4$ defined by $\Phi(X) = \bigoplus_{i=1}^2 \Phi_i(X_i)$, where $X = \begin{bmatrix} X_1 & 0 \\ 0 & X_2 \end{bmatrix} \in \mathbb{M}_4$, $X_i \in \mathbb{M}_2$, and the submaps are:
\[
\Phi_1(X_1) = X_1 + X_1^*, \quad \Phi_2(X_2) = X_2 - X_2^*,
\]
We compute each part of Theorem~\ref{1thm:reduced-cp-extension} to construct a CP extension with auxiliary dimension $k = 2 < r = 4$.
The input and output spaces decompose as $\mathbb{M}_4 \cong \mathbb{M}_2 \oplus \mathbb{M}_2$, with $m_1 = m_2 = n_1 = n_2 = 2$. The Choi matrix of $\Phi$ is:
\[
\mathbf{C}_\Phi = \sum_{i,j=1}^4 E_{ij} \otimes \Phi(E_{ij}),
\]
where $E_{ij} $ are matrix units in $\mathbb{M}_4$. Since $\Phi(X) = \bigoplus_{i=1}^2 \Phi_i(X_i)$, we compute $\mathbf{C}_\Phi = \bigoplus_{i=1}^2 \mathbf{C}_{\Phi_i}$, where:
\[
\mathbf{C}_{\Phi_i} = \sum_{j,k=1}^2 E_{jk}^{(i)} \otimes \Phi_i(E_{jk}^{(i)}),
\]
and $E_{jk}^{(i)}$ are matrix units in $\mathbb{M}_2$ for block $i$. For $\Phi_1(X_1) = X_1 + X_1^*$ we have
\[
\mathbf{C}_{\Phi_1} = \begin{bmatrix} 2 & 0 & 0 & 1 \\ 0 & 0 & 1 & 0 \\ 0 & 1 & 0 & 0 \\ 1 & 0 & 0 & 2 \end{bmatrix},
\]
as in Example 2.8. For $\Phi_2(X_2) = X_2 - X_2^*$ we have
\[
\mathbf{C}_{\Phi_2} = \begin{bmatrix} 0 & 0 & 0 & 0 \\ 0 & 0 & 1 & 0 \\ 0 & -1 & 0 & 0 \\ 0 & 0 & 0 & 0 \end{bmatrix}.
\]
The full Choi matrix is block-diagonal
$ \mathbf{C}_\Phi =  \mathbf{C}_{\Phi_1} \oplus \mathbf{C}_{\Phi_2}$.
Compute ranks: For \(\mathbf{C}_{\Phi_1}\), eigenvalues are 3, 1, 1, -1 (Example 2.8), so \(r_1 = 4\). For \(\mathbf{C}_{\Phi_2}\), eigenvalues are \(\pm 1\) (each multiplicity 1) and 0 (multiplicity 2), so \(r_2 = 2\). Total rank: \(r = r_1 + r_2 = 4 + 2 = 6\).

For \(\Phi_1\), use Example 2.8’s spectral decomposition:
\[
\mathbf{C}_{\Phi_1} = 3 u_1 u_1^* + u_2 u_2^* + u_3 u_3^* - u_4 u_4^*,
\]
with eigenvectors:
\[
u_1 = \frac{1}{\sqrt{2}}\begin{bmatrix} 1 \\ 0 \\ 0 \\ 1 \end{bmatrix}, \quad u_2 = \frac{1}{\sqrt{2}}\begin{bmatrix} 1 \\ 0 \\ 0 \\ -1 \end{bmatrix}, \quad u_3 = \frac{1}{\sqrt{2}}\begin{bmatrix} 0 \\ 1 \\ 1 \\ 0 \end{bmatrix}, \quad u_4 = \frac{1}{\sqrt{2}}\begin{bmatrix} 0 \\ 1 \\ -1 \\ 0 \end{bmatrix}.
\]
Corresponding operators \(A_{1,j} \in \mathbb{M}_2\) (via \(\operatorname{vec}(A_{1,j}) = u_j\)):
\[
A_{1,1} = \frac{1}{\sqrt{2}}\begin{bmatrix} 1 & 0 \\ 0 & 1 \end{bmatrix}, \quad A_{1,2} = \frac{1}{\sqrt{2}}\begin{bmatrix} 1 & 0 \\ 0 & -1 \end{bmatrix}, \quad A_{1,3} = \frac{1}{\sqrt{2}}\begin{bmatrix} 0 & 1 \\ 1 & 0 \end{bmatrix}, \quad A_{1,4} = \frac{1}{\sqrt{2}}\begin{bmatrix} 0 & 1 \\ -1 & 0 \end{bmatrix}.
\]
Define \(\Psi_1: \mathbb{M}_2 \otimes \mathbb{M}_4 \to \mathbb{M}_2 \otimes \mathbb{M}_4\):
\[
\Psi_1(Y_1) = 3 (A_{1,1} \otimes e_{1,1}) Y_1 (A_{1,1}^* \otimes e_{1,1}^*) + \sum_{j=2}^4 (A_{1,j} \otimes e_{1,j}) Y_1 (A_{1,j}^* \otimes e_{1,j}^*),
\]
with basis \(\{e_{1,j}\}_{j=1}^4\) for \(\mathbb{C}^4\), and:
\[
Q_1 = e_{1,1} e_{1,1}^* + e_{1,2} e_{1,2}^* + e_{1,3} e_{1,3}^* - e_{1,4} e_{1,4}^* = \operatorname{diag}(1, 1, 1, -1).
\]
For \(\Phi_2\), compute the spectral decomposition of \(\mathbf{C}_{\Phi_2}\):
\[
\mathbf{C}_{\Phi_2} = \begin{bmatrix} 0 & 0 & 0 & 0 \\ 0 & 0 & 1 & 0 \\ 0 & -1 & 0 & 0 \\ 0 & 0 & 0 & 0 \end{bmatrix},
\]
with eigenvalues 1, -1 (each multiplicity 1), and eigenvectors:
\[
u_{2,1} = \frac{1}{\sqrt{2}}\begin{bmatrix} 0 \\ 1 \\ 1 \\ 0 \end{bmatrix} (\lambda = 1), \quad u_{2,2} = \frac{1}{\sqrt{2}}\begin{bmatrix} 0 \\ 1 \\ -1 \\ 0 \end{bmatrix} (\lambda = -1).
\]
Operators: \(A_{2,1} = \frac{1}{\sqrt{2}}\begin{bmatrix} 0 & 1 \\ 1 & 0 \end{bmatrix}\), \(A_{2,2} = \frac{1}{\sqrt{2}}\begin{bmatrix} 0 & 1 \\ -1 & 0 \end{bmatrix}\). Define \(\Psi_2: \mathbb{M}_2 \otimes \mathbb{M}_2 \to \mathbb{M}_2 \otimes \mathbb{M}_2\):
\[
\Psi_2(Y_2) = (A_{2,1} \otimes e_{2,1}) Y_2 (A_{2,1}^* \otimes e_{2,1}^*) + (A_{2,2} \otimes e_{2,2}) Y_2 (A_{2,2}^* \otimes e_{2,2}^*),
\]
with \(\{e_{2,j}\}_{j=1}^2\) for \(\mathbb{C}^2\), and:
\[
Q_2 = e_{2,1} e_{2,1}^* - e_{2,2} e_{2,2}^* = \begin{bmatrix} 1 & 0 \\ 0 & -1 \end{bmatrix}.
\]

Set $k = \max\{r_1, r_2\} = \max\{4, 2\} = 4$. Let \(\mathcal{K} = \mathbb{C}^4\) with basis \(\{e_l\}_{l=1}^4\). Define isometries: for \(\Phi_1\), \(r_1 = 4\), so $V_1: \mathbb{C}^4 \to \mathbb{C}^4\), $V_1 e_{1,j} = e_j$, i.e., $V_1 = I_4$. For \(\Phi_2\), $r_2 = 2$, so $V_2: \mathbb{C}^2 \to \mathbb{C}^4\), $V_2 e_{2,j} = e_j$ for $j=1,2$, represented as:
\[
V_2 = \begin{bmatrix} 1 & 0 \\ 0 & 1 \\ 0 & 0 \\ 0 & 0 \end{bmatrix}.
\]
Verify: $V_2^* V_2 = I_2$, $V_1^* V_1 = I_4$.

Define $\Psi: \mathbb{M}_4 \otimes \mathbb{M}_4 \to \mathbb{M}_4 \otimes \mathbb{M}_4$:
\[
\Psi(Y) = 3 (A_{1,1} \otimes e_1) Y (A_{1,1}^* \otimes e_1^*) + \sum_{j=2}^4 (A_{1,j} \otimes e_j) Y (A_{1,j}^* \otimes e_j^*) + \sum_{j=1}^2 (A_{2,j} \otimes e_j) Y (A_{2,j}^* \otimes e_j^*).
\]
For $Y = X \otimes I_4$, $X = \begin{bmatrix} X_1 & 0 \\ 0 & X_2 \end{bmatrix}$:
\[
\Psi(X \otimes I_4) = \begin{bmatrix} 3 A_{1,1} X_1 A_{1,1}^* + \sum_{j=2}^4 A_{1,j} X_1 A_{1,j}^* & 0 \\ 0 & \sum_{j=1}^2 A_{2,j} X_2 A_{2,j}^* \end{bmatrix} \otimes \text{diagonal terms}.
\]

We have
\[
Q_1 = \operatorname{diag}(1, 1, 1, -1), \quad V_1 Q_1 V_1^* = Q_1.
\]
\[
V_2 Q_2 V_2^* = \begin{bmatrix} 1 & 0 \\ 0 & 1 \\ 0 & 0 \\ 0 & 0 \end{bmatrix} \begin{bmatrix} 1 & 0 \\ 0 & -1 \end{bmatrix} \begin{bmatrix} 1 & 0 & 0 & 0 \\ 0 & 1 & 0 & 0 \end{bmatrix} = \begin{bmatrix} 1 & 0 & 0 & 0 \\ 0 & -1 & 0 & 0 \\ 0 & 0 & 0 & 0 \\ 0 & 0 & 0 & 0 \end{bmatrix}.
\]
\[
Q = V_1 Q_1 V_1^* + V_2 Q_2 V_2^* = \begin{bmatrix} 1 & 0 & 0 & 0 \\ 0 & 1 & 0 & 0 \\ 0 & 0 & 1 & 0 \\ 0 & 0 & 0 & -1 \end{bmatrix} + \begin{bmatrix} 1 & 0 & 0 & 0 \\ 0 & -1 & 0 & 0 \\ 0 & 0 & 0 & 0 \\ 0 & 0 & 0 & 0 \end{bmatrix} = \begin{bmatrix} 2 & 0 & 0 & 0 \\ 0 & 0 & 0 & 0 \\ 0 & 0 & 1 & 0 \\ 0 & 0 & 0 & -1 \end{bmatrix}.
\]

 \end{example}



\begin{thebibliography}{9}
\bibitem{ando2018}
T. Ando, \textit{Positive map as difference of two completely positive or super-positive maps}, Adv. Opr. Theory, {\bf 3}(1) (2018), 53--60.


\bibitem{bengtsson2006}
I. Bengtsson,  and K.  \.{Z}yczkowski,    \textit{Geometry of Quantum States: An introduction to quantum entanglement},  Cambridge University Press, 2006.

\bibitem{buscemi2013}
F. Buscemi, M. Dall'Arno, M. Ozawa, and V. Vedral, \textit{Direct observation of any two-point quantum correlation function},  arXiv:1312.4240v1, (2013).

\bibitem{choi1975} 
M. -D. Choi,  \textit{Completely positive linear maps on complex matrices} Linear Algebra Appl., {\bf 10} (3) (1975), 285--290.

\bibitem{dadkhah2024}
A. Dadkhah, M. Kian, and M. S. Moslehian, \textit{Decomposition of tracial positive maps and applications in quantum information},  Anal. Math. Phys., {\bf 14} (2024), 48.

\bibitem{han2024}
K. H. Han, S.-H. Kye, and E. St\o rmer, \textit{Infinite dimensional analogues of Choi matrices}, J. Funct. Anal, {\bf 287} (8) (2024), 110557.

\bibitem{homa2024} 
G. Homa, A. Ortega, M. Koniorczyk, \textit{Choi Representation of Completely Positive Maps in Brief},  Z.  Naturforsch.,  {\bf 79} (12) (2024), 1123--1133.

\bibitem{horn1991} R. A. Horn,   and C. R.  Johnson,    \textit{Topics in matrix analysis}, Cambridge University Press, 1991.

\bibitem{kian2025}
M. Kian, \textit{Deviation from complete positivity: Structural insights and quantum information applications}, arXiv:2506.03773, (2025).

\bibitem{Majewski2024} W. A. Majewski, \textit{On Positive Maps in Quantum Information}, Russian J. Math. Phys. {\bf  21} (2014), 362--372.

    \bibitem{parzygnat2023}
A. J. Parzygnat, J. Fullwood, F. Buscemi, and G. Chiribella, \textit{Virtual quantum broadcasting}, Phys. Rev. Lett. {\bf 132}  (2024), 110203.

\bibitem{Piotrowski2023} J. Piotrowski, L. Skowronek, \textit{New Tools for Investigating Positive Maps in Matrix Algebras}, Linear Algebra Appl., {\bf 662} (2023), 39--48.

 \bibitem{sisu2018}
 K. Siudzi\'{n}ska, S.  Chakraborty,  and D.  Chru\'{s}ci\'{n}ski,   \textit{Interpolating between Positive and Completely Positive Maps: A New Hierarchy of Entangled States},  Linear Algebra Appl.,   {\bf 662} (2018), 39--48

 \bibitem{stormer2013}
E. St\o rmer, \textit{Positive linear maps of operator algebras}, Springer, 2013.

\bibitem{watrous2018} J. Watrous,   \textit{The theory of quantum information}, Cambridge University Press, 2018.

\end{thebibliography}
\end{document}